\newtheorem{teo}{Theorem}[section]
\newtheorem{prop}[teo]{Proposition}
\newtheorem{lem}[teo]{Lemma}
\newtheorem{defi}[teo]{Definition}
\newtheorem{rem}[teo]{Remark}
\begin{document} 

\title{\vspace*{0cm} Riemannian metrics in infinite dimensional self-adjoint operator groups}

\date{}
\author{Alberto Manuel L\'opez Galv\'an\footnote{Supported by Instituto Argentino de Matem\'atica (CONICET).}}

\maketitle

\setlength{\parindent}{0cm} %% para que no indente los parrafos nuevos

\begin{abstract}
The aim of this paper is the study of the geodesic distance in operator groups with several Riemannian metrics. More precisely we study the geodesic distance in self-adjoint operator groups with the left invariant Riemannian metric induced by the infinite trace and extend known results about the completeness of some classical Banach-Lie groups to this general class.\footnote{MSC 2010: Primary 47D03; Secondary 58B20, 53C22.} \footnote{Keywords: Riemannian-Hilbert manifolds, Banach-Lie general linear group, self-adjoint group.}
\end{abstract}

\section{Background and definitions}
In \cite{Lopez Galvan} we studied Riemannian metrics on an infinite dimensional symplectic group. There we proved that if we endow the tangent spaces with the classical left invariant metric then the metric space corresponding to the geodesic distance is complete. In this paper we will extend this result to a more general class of Riemannian operator groups, the self-adjoint operator groups. Examples of these self-adjoint operator groups are the classical unitary group ${\rm U}_2(\mathcal{H})$ and the symplectic group ${\rm Sp}_2(\mathcal{H})$. We will follow the notations and definitions of \cite{Larotonda} and \cite{Lopez Galvan}, so we recall some of the facts stated there.  

Let $\mathcal{B}(\mathcal{H})$ be the space of bounded operators acting on an infinite dimensional complex Hilbert space $\mathcal{H}$. Denote by $\mathcal{B}_2(\mathcal{H})$ the Hilbert-Schmidt class $$\mathcal{B}_2(\mathcal{H})=\left\{a \in \mathcal{B}(\mathcal{H}): Tr(a^*a)< \infty\right\}$$ where $Tr$ is the usual trace in $\mathcal{B}(\mathcal{H})$. This space is a Hilbert space with the inner product $$\langle a,b \rangle=Tr(b^*a).$$ The norm induced by this inner product is called the 2-norm and denoted by $$\Vert a \Vert_2=Tr(a^*a)^{1/2}.$$ The usual operator norm will be denoted by $\Vert \ \Vert$.
We use the subscript $h$ (resp. $ah$) to denote the sets of hermitic (resp. anti-hermitic) operators, e.g. $\mathcal{B}_2(\mathcal{H})_{ah}=\lbrace x\in \mathcal{B}_2(\mathcal{H}): x=-x^*\rbrace$.
In $\mathcal{B}(\mathcal{H})$ we have the classical general linear group  $${\rm GL_2}(\mathcal{H})=\lbrace g \in {\rm GL}(\mathcal{H}) : g-1 \in \mathcal{B}_2(\mathcal{H}) \rbrace.$$ This group has a differentiable structure when endowed with the metric $\Vert g_1 - g_2\Vert_2$ (note that $g_1 - g_2 \in \mathcal{B}_2(\mathcal{H})$); it is a Banach-Lie group with Banach-Lie algebra $\mathcal{B}_2(\mathcal{H})$.

Let ${\rm U}_2(\mathcal{H})\subset {\rm GL_2}(\mathcal{H})$ be the unitary group. It is known that ${\rm U}_2(\mathcal{H})$ is a Banach-Lie subgroup of $ {\rm GL_2}(\mathcal{H})$ with Banach-Lie algebra $\mathcal{B}_2({\mathcal{H}})_{ah}$.

\begin{defi}
Let $G$ be a connected abstract subgroup of ${\rm GL_2}(\mathcal{H})$. We say that $G$ is a self-adjoint subgroup if $g^* \in G$ whenever $g \in G$ (for short,  we write $G^* = G$). Note that a connected Banach-Lie group $G$ is
self-adjoint if and only if $\mathfrak{g^*} =\mathfrak{g}$, where $\mathfrak{g}$ denotes the Banach-Lie algebra of $G$.
\end{defi}

Throughout this paper, if $M$ is any submanifold of ${\rm GL_2}(\mathcal{H})$, we will denote by  $\mathfrak{b}(g,\cdot)$ the metric in each tangent space  $T_gM$. The length of a smooth curve measured with the metric $\mathfrak{b}$ will be denoted by $$L_{\mathfrak{b}}(\alpha)=\int_0^1 \mathfrak{b}(\alpha(t),\dot{\alpha}(t)) dt.$$ We define the geodesic distance between two points $p,q \in M\subseteq {\rm GL_2}(\mathcal{H})$ as the infimum of the length of all piecewise smooth curves in $M$ joining $p$ to $q$,  $$d_{\mathfrak{b}}(p,q)=\inf\left\{L_{\mathfrak{b}}(\alpha): \alpha\subset M, \alpha(0)=p, \ \alpha(1)=q \right\}.$$     
We will denote with $G$ a closed, connected self-adjoint Banach-Lie subgroup of $ {\rm GL_2}(\mathcal{H})$ and by $\mathfrak{g}\subset \mathcal{B}_2(\mathcal{H})$ its closed Banach-Lie algebra. Using the left action on itself, the tangent space at $g \in G$ is $$T_gG=g.\mathfrak{g}.$$ We introduce the classical left invariant metric for $v \in T_gG$  by 
\begin{equation}\label{metinvizq}
\mathcal{I}(g,v)=\Vert g^{-1}v\Vert_2. 
\end{equation}
One of the purposes of this paper is to prove that for such $G$ the metric space $(G,d_{\mathcal{I}})$ is complete. Using the polar decomposition we will define a mixed metric related to the unitary and positive part of the group. We also study the geodesic curves with these metrics and we will describe them in terms of exponentials of operators. 

If $A \subset {\rm GL}_2(\mathcal{H})$ is a set, $\langle A\rangle$ will denote the abstract subgroup generated by $A$ (the group whose elements are the inverses and the finite products of elements in $A$). 
Let ${\rm GL}_2^+(\mathcal{H}) := \lbrace g > 0 : g \in  {\rm GL_2}(\mathcal{H})\rbrace$ be the subset of positive invertible operators. It is known that ${\rm GL}_2^+(\mathcal{H})$ is a submanifold of the open set $\Delta=\lbrace \beta + X \in \mathbb{C}\oplus \mathcal{B}_2(\mathcal{H}) : \beta + X> 0\rbrace$. Let $\vert x \vert =(x^*x)^{1/2}=\exp(\frac{1}{2} \ln(x^*x))$ be the modulus operator, then it is clear that $\vert x \vert \in {\rm GL}_2^+(\mathcal{H}) \subset {\rm GL_2}(\mathcal{H})$ if $x\in 1+\mathcal{B}_2(\mathcal{H})$.

For $p \in {\rm GL}_2^+(\mathcal{H})$, we identify the tangent space $T_p {\rm GL}_2^+(\mathcal{H})$ with $\mathcal{B}_2(\mathcal{H})_h$ and endow this manifold with a complete Riemannian metric by means of the formula \begin{equation}\label{metpos}\mathfrak{p}(p,x)=\Vert p^{-1/2}xp^{-1/2}\Vert_2 \end{equation} for $p \in {\rm GL}_2^+(\mathcal{H})$ and $x \in T_p {\rm GL}_2^+(\mathcal{H})$. Euler's equation $\nabla_{\dot{\gamma}}\dot{\gamma}=0$ for the covariant derivative introduced by the Riemannian connection reads $\ddot{\gamma}=\dot{\gamma}\gamma^{-1}\dot{\gamma}$, and it is not hard to see that the unique solution of this equation with $\gamma(0)=p$ and $\gamma(1)=q$ is given by the smooth curve $$\gamma_{pq}(t)=p^{1/2}(p^{-1/2}qp^{-1/2})^tp^{1/2}.$$ The exponential map of ${\rm GL}_2^+(\mathcal{H})$ is given by $$Exp_p: T_p{\rm GL}_2^+(\mathcal{H}) \rightarrow {\rm GL}_2^+(\mathcal{H}), \ Exp_p(v)=p^{1/2}\exp(p^{-1/2}vp^{-1/2})p^{1/2}.$$
In \cite{Larotonda} G. Larotonda obtained general geometric results about ${\rm GL}_2^+(\mathcal{H})$ with the above metric: Riemannian conection, geodesic, sectional curvature, convexity of geodesic distance and completeness. 

\section{A polar Riemannian structure on ${\rm GL_2}(\mathcal{H})$}
The polar decomposition of $g \in {\rm GL_2}(\mathcal{H})$ induces a diffeomorphism 
\begin{align}
{\rm GL_2}(\mathcal{H}) &\stackrel{\varphi}\longrightarrow  {\rm U}_2(\mathcal{H}) \times {\rm GL}_2^+(\mathcal{H}) \label{difeopolar}\\ 
                g                    &\longmapsto  (u,\vert g \vert).\nonumber\
\end{align} 
This fact was noted in Prop.14 (iv) on page 98 of the book \cite{Harpe}. If we put the left invariant metric on ${\rm U}_2(\mathcal{H})$ and the positive metric (\ref{metpos}) on ${\rm GL}_2^+(\mathcal{H})$, then we can endow the product manifold ${\rm U}_2(\mathcal{H}) \times {\rm GL}_2^+(\mathcal{H})$ with the usual product metric, that is: if $v=(x,y) \in T_u {\rm U}_2(\mathcal{H})\times T_{\vert g\vert}{\rm GL}_2^+(\mathcal{H})$ we put 
\begin{eqnarray}\label{metpolar}
\mathcal{P}\big((u,\vert g\vert),v\big) & := &\bigg(\mathcal{I}(u,x)^2 + \mathfrak{p}(\vert g\vert,y)^2\bigg)^{1/2} \nonumber\\
                                                           & = &\bigg(\Vert x\Vert_2^2+ \Vert \vert g\vert^{-1/2}y\vert g\vert^{-1/2}\Vert_2^2\bigg)^{1/2}. 
\end{eqnarray}

This is the product metric in the Riemannian manifold ${\rm U}_2(\mathcal{H}) \times {\rm GL}_2^+(\mathcal{H})$. The map $\varphi$ is an immersion, from this we can define a new Riemannian metric in the group in the following way: if $v,w \in T_g{\rm GL}_2(\mathcal{H})$ we put $$\langle v,w\rangle_g:=\langle d\varphi_g(v) , d\varphi_g(w)\rangle_{(u,\vert g \vert)}.$$ It is clear that $\varphi$ is an isometric map with the above metric and if $\alpha$ is any curve in the group ${\rm GL}_2(\mathcal{H})$ we can measure its length as $L_{\mathcal{P}}(\varphi\circ\alpha)$.   

\begin{teo} \label{minpolar} Let $g\in {\rm GL}_2(\mathcal{H})$ with polar decomposition $u\vert g\vert$ and suppose that $u=e^x$ with $x \in \mathcal{B}_2(\mathcal{H})_{ah}$ and $\Vert x\Vert\leq \pi$, then the curve $\alpha(t)=e^{tx}\vert g\vert^t\subset {\rm GL}_2(\mathcal{H})$ has minimal length among all curves joining $1$ to $g$, if we endow ${\rm GL}_2(\mathcal{H})$ with the polar Riemannian metric (\ref{metpolar}).
\end{teo}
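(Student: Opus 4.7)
The central idea is to transport the problem to the product manifold via the polar isometry $\varphi$. Since $\varphi$ is an isometry between $({\rm GL}_2(\mathcal{H}),\mathcal{P})$ and ${\rm U}_2(\mathcal{H})\times{\rm GL}_2^+(\mathcal{H})$ with the product metric, a curve $\beta$ in ${\rm GL}_2(\mathcal{H})$ is minimizing for $\mathcal{P}$ if and only if $\varphi\circ\beta$ is minimizing in the product. Thus I would first compute $\varphi\circ\alpha$: because $e^{tx}$ is unitary (as $x\in\mathcal{B}_2(\mathcal{H})_{ah}$) and $\vert g\vert^t$ is positive, the uniqueness of the polar decomposition yields $\varphi(\alpha(t))=(e^{tx},\vert g\vert^t)$, so the problem splits cleanly into the two factors.

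Next I would establish the standard fact that in a Riemannian product equipped with the metric $\mathcal{P}$, the product of minimizing curves is itself minimizing. The argument is a Minkowski inequality applied pointwise in $\mathbb{R}^2$: for any competitor $(\beta_1,\beta_2)$ joining the same endpoints,
\[
L_{\mathcal{P}}(\beta_1,\beta_2)=\int_0^1\sqrt{\mathcal{I}(\beta_1,\dot\beta_1)^2+\mathfrak{p}(\beta_2,\dot\beta_2)^2}\,dt\ \geq\ \sqrt{L_{\mathcal{I}}(\beta_1)^2+L_{\mathfrak{p}}(\beta_2)^2}\ \geq\ \sqrt{d_{\mathcal{I}}^2+d_{\mathfrak{p}}^2},
\]
with equality in the first step precisely when the two speeds are proportional along the curve, a condition automatic for constant-speed parametrizations of geodesics in each factor.

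It then remains to check that each component of $\varphi\circ\alpha$ is minimizing in its factor. The curve $t\mapsto\vert g\vert^t$ is the unique geodesic from $1$ to $\vert g\vert$ in ${\rm GL}_2^+(\mathcal{H})$ arising from $\gamma_{pq}(t)=p^{1/2}(p^{-1/2}qp^{-1/2})^tp^{1/2}$ with $p=1$ and $q=\vert g\vert$, and its minimizing character is the result of Larotonda recalled in Section~1. The curve $t\mapsto e^{tx}$ should be minimizing in ${\rm U}_2(\mathcal{H})$ under the left invariant metric; this is the classical short-exponential theorem for the Hilbert--Schmidt unitary group, and the hypothesis $\Vert x\Vert\leq\pi$ enters precisely here. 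Since both components are parametrized at constant speed, equality holds in the Minkowski step above and $L_{\mathcal{P}}(\varphi\circ\alpha)=\sqrt{d_{\mathcal{I}}(1,u)^2+d_{\mathfrak{p}}(1,\vert g\vert)^2}$, which by the inequality above is a lower bound for the $\mathcal{P}$-length of any curve in ${\rm GL}_2(\mathcal{H})$ from $1$ to $g$.

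The main obstacle I anticipate is the minimality of $t\mapsto e^{tx}$ in ${\rm U}_2(\mathcal{H})$ under the hypothesis $\Vert x\Vert\leq\pi$: the positive factor is handled by Larotonda's earlier work and the product-metric inequality is routine, whereas the unitary bound is the delicate analytic input tying the operator norm hypothesis to the $2$-norm length. Once that ingredient is invoked, assembly of the three steps yields the theorem.
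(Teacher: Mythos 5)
Your proposal is correct and follows essentially the same route as the paper: push the curve through the polar diffeomorphism $\varphi$, apply the Minkowski (integral) inequality to bound the $\mathcal{P}$-length of any competitor below by $\bigl(L_{\mathcal{I}}(\beta_1)^2+L_{\mathfrak{p}}(\beta_2)^2\bigr)^{1/2}$, and then invoke the minimality of $e^{tx}$ in ${\rm U}_2(\mathcal{H})$ (where $\Vert x\Vert\leq\pi$ is used) and of $\vert g\vert^t$ in ${\rm GL}_2^+(\mathcal{H})$, exactly the two ingredients cited in the paper. The only cosmetic difference is that the paper computes $L_{\mathcal{P}}(\varphi\circ\alpha)=\bigl(\Vert x\Vert_2^2+\Vert\ln\vert g\vert\Vert_2^2\bigr)^{1/2}$ directly rather than via the equality case of Minkowski, which changes nothing of substance.
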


\begin{proof} By the polar decomposition, $\varphi\circ \alpha(t)=(e^{tx},\vert g\vert^t)$ and its length is $$L_{\mathcal{P}}(\varphi\circ \alpha)=\int_0^{1}        \mathcal{P}\big((e^{tx},\vert g\vert^t),(xe^{tx},\ln \vert g\vert \vert g\vert^{t})\big)dt=\big({\| x \|}^{2}_2 + {\|\ln \vert g\vert\|}^{2}_2\big)^{1/2}.$$ Let $\beta$ be another curve that joins the same endpoints and suppose that $\beta=\beta_1\beta_2$ is its polar decomposition where  $\beta_1\subset {\rm U}_2(\mathcal{H})$ and $\beta_2\subset {\rm GL}_2^+(\mathcal{H})$, then $$L_{\mathcal{P}}(\varphi\circ \beta)=\int_0^{1} \mathcal{P}\big((\beta_1,\beta_2), (\dot{\beta_1}, \dot{\beta_2})\big)dt = \int_0^{1} \big( \mathcal{I}(\beta_1,\dot{\beta_1})^{2} + \mathfrak{p}(\beta_2,\dot{\beta_2})^{2}\big)^{1/2} dt. $$ Using the Minkowski inequality (see inequality 201 of \cite{Hardy}) we have,
 \begin{align}\label{desunpos}  \int_0^{1} \bigl( \mathcal{I}(\beta_1,\dot{\beta_1})^{2} + \mathfrak{p}(\beta_2,\dot{\beta_2})^{2} \bigl)^{1/2} dt &\geq \biggl( \biggl\lbrace\int_0^{1} \mathcal{I}(\beta_1,\dot{\beta_1})\biggl\rbrace^{2} + \biggl\lbrace\int_0^{1} \mathfrak{p}(\beta_2,\dot{\beta_2})\biggl\rbrace^{2}\biggl)^{1/2}\nonumber\\
& = \biggl(L_{\mathcal{I}}(\beta_1)^{2}+L_{\mathfrak{p}}(\beta_2)^{2}\biggl)^{1/2}.
\end{align}
 It is known that the geodesic curve $e^{tx}$ has minimal length among all smooth curves in 
${\rm U}_2(\mathcal{H})$ joining the same endpoints (see \cite{Andruchow2}); using this fact and since the curve $\vert g\vert^{t}$ has minimal length with the positive metric $\mathfrak{p}$ (see \cite{Larotonda}) we have, $$L_{\mathcal{I}}(\beta_1)\geq L_{\mathcal{I}}(e^{tx})=\|x\|_2 \ \ \mbox{and} \ \ L_{\mathfrak{p}}(\beta_2)\geq L_{\mathfrak{p}}(e^{t\ln(\vert g \vert})=\|\ln \vert g \vert\|_2$$ then it is clear that $L_{\mathcal{P}}(\varphi\circ \beta)\geq L_{\mathcal{P}}(\varphi\circ \alpha)$.
\end{proof}

\begin{rem} Let $p,q \in {\rm GL}_2(\mathcal{H})$, suppose that $u_p\vert p\vert$ and $u_q\vert q\vert$ are their polar decompositions. From the surjectivity of the exponential map we can choose $z \in \mathcal{B}_2(\mathcal{H})_{ah}$ such that $u_q=u_pe^{z}$ with $\|z\|\leq \pi$. Then the curve
$$\alpha_{p,q}(t)=u_pe^{tz}\vert p\vert^{1/2}({\vert p\vert}^{-1/2}\vert q\vert {\vert p\vert}^{-1/2})^t\vert p\vert^{1/2}\subset  {\rm GL}_2(\mathcal{H})$$ has minimal length among all curves joining $p$ to $q$.  
\end{rem}
The above fact shows that the curve $\alpha_{p,q}$ is a geodesic of the Levi-Civita connection of the polar metric. Its length is $$\bigg({\|z\|}^{2}_2 + {\|\ln \vert p\vert^{-1/2} \vert q\vert\vert p\vert^{-1/2}\|}^{2}_2\bigg)^{1/2}. $$ From this, the geodesic distance is
$$d_{\mathcal{P}}(p,q)=\big(d_{\mathcal{I}}(u_p,u_q)^2+d_{\mathfrak{p}}(\vert p\vert,\vert q \vert)^2\big)^{1/2}.$$

\begin{prop} The metric space $( {{\rm GL}_2(\mathcal{H})},d_\mathcal{P})$ is complete.
\end{prop}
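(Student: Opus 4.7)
The plan is to exploit the isometric factorization afforded by the polar diffeomorphism $\varphi$. The preceding remark gave the explicit identity
$$d_{\mathcal{P}}(p,q)^2 = d_{\mathcal{I}}(u_p,u_q)^2 + d_{\mathfrak{p}}(|p|,|q|)^2,$$
which says exactly that $\varphi$ is an isometry of metric spaces between $({\rm GL}_2(\mathcal{H}), d_\mathcal{P})$ and the Pythagorean product of $({\rm U}_2(\mathcal{H}), d_\mathcal{I})$ with $({\rm GL}_2^+(\mathcal{H}), d_\mathfrak{p})$. Since a product of metric spaces endowed with such a product distance is complete if and only if each factor is complete, the proposition reduces to the completeness of the two factors.

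First I would take a $d_\mathcal{P}$-Cauchy sequence $(g_n) \subset {\rm GL}_2(\mathcal{H})$, write the polar decomposition $g_n = u_n |g_n|$, and read off from the identity above that $(u_n)$ is Cauchy in $({\rm U}_2(\mathcal{H}), d_\mathcal{I})$ while $(|g_n|)$ is Cauchy in $({\rm GL}_2^+(\mathcal{H}), d_\mathfrak{p})$.

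Next I would invoke the two completeness results separately. The completeness of $({\rm GL}_2^+(\mathcal{H}), d_\mathfrak{p})$ is the result of Larotonda cited in the introduction, and the completeness of $({\rm U}_2(\mathcal{H}), d_\mathcal{I})$ is the classical fact for the unitary Hilbert--Lie group with its left invariant $2$-norm metric (recalled in \cite{Andruchow2}). These provide limits $u \in {\rm U}_2(\mathcal{H})$ and $p \in {\rm GL}_2^+(\mathcal{H})$. Setting $g := up$, uniqueness of the polar decomposition gives $u_g = u$ and $|g| = p$, and a final application of the distance identity yields $d_\mathcal{P}(g_n,g) \to 0$.

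The only step that is not entirely automatic is the invocation of the completeness of the unitary factor; if one did not want to cite it directly, one could instead bound $\| u_n - u_m \|_2 \leq d_\mathcal{I}(u_n,u_m)$ by the length of the geodesic segment, obtain a $2$-norm Cauchy sequence with limit $u \in 1 + \mathcal{B}_2(\mathcal{H})$, and check unitarity by passing to the limit in $u_n^* u_n = 1$, using that ${\rm U}_2(\mathcal{H})$ is closed in $({\rm GL}_2(\mathcal{H}), \|\cdot\|_2)$. Aside from this, the proof is essentially a formal consequence of the isometric polar splitting already established.
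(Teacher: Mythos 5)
Your proposal is correct and follows essentially the same route as the paper: split a $d_{\mathcal{P}}$-Cauchy sequence via the polar decomposition, use the identity $d_{\mathcal{P}}(p,q)^2=d_{\mathcal{I}}(u_p,u_q)^2+d_{\mathfrak{p}}(\vert p\vert,\vert q\vert)^2$ to see that the unitary and positive parts are Cauchy, invoke the completeness results of \cite{Andruchow2} and \cite{Larotonda}, and recombine the limits as $x=ug$. The alternative closing argument for the unitary factor is a reasonable safeguard but not needed.
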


\begin{proof} Let $(x_n)\subset {\rm GL}_2(\mathcal{H})$ be a Cauchy sequence with $d_{\mathcal{P}}$, if $x_n=u_{x_n}\vert x_n\vert$ is its polar decomposition, we have that $$d_{\mathcal{I}}(u_{x_n},u_{x_m})\leq d_{\mathcal{P}}(x_n,x_m)=\big(d_{\mathcal{I}}(u_{x_n},u_{x_m})^2+d_{\mathfrak{p}}(\vert x_n\vert,\vert x_m \vert)^2\big)^{1/2}$$ then the unitary part is a Cauchy sequence in $({\rm U}_2(\mathcal{H}),d_{\mathcal{I}})$  and by \cite{Andruchow2} it is  $d_{\mathcal{I}}$ convergent to an element $u \in {\rm U}_2(\mathcal{H})$. Analogously the positive part is a Cauchy sequence in $({\rm GL}_2^+(\mathcal{H}),d_{\mathfrak{p}})$ then it is convergent to an element $g\in {\rm GL}_2^+(\mathcal{H})$ (see \cite{Larotonda}). If we put $x:=ug \in {\rm GL}_2(\mathcal{H})$  then, $$d_{\mathcal{P}}(x_n,x)=\big(d_{\mathcal{I}}(u_{x_n},u)^2+d_{\mathfrak{p}}(\vert x_n\vert,g)^2\big)^{1/2} \rightarrow 0.$$         
\end{proof}

In the next proposition we will compare the geodesic distance measured with the polar metric versus the left invariant metric. 

\begin{prop} \label{despol} Given $p,q \in {\rm GL}_2(\mathcal{H})$, if we denote $v:=\vert p\vert^{-1/2} \vert q\vert\vert p\vert^{-1/2}$ we can estimate the geodesic distance $d_{\mathcal{I}}$ by the geodesic distance $d_{\mathcal{P}}$ as, $$d_{\mathcal{I}}(p,q)\leq c(p,q)d_{\mathcal{P}}(p,q)$$ where $$c(p,q)^2=2\max\big\lbrace \ e^{4\|\ln(v)\|}  \big(\| p \|  \| p^{-1} \|\big)^2 ,   \| p \|  \| p^{-1}\| \big\rbrace.$$  
\end{prop}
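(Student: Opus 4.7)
The natural strategy is to use the minimizing polar geodesic $\alpha_{p,q}(t) = u_p e^{tz}|p|^{1/2}(|p|^{-1/2}|q||p|^{-1/2})^t|p|^{1/2}$ exhibited in the preceding remark---whose $\mathcal{P}$-length equals $d_{\mathcal{P}}(p,q)$---as a test curve for the left-invariant distance $d_{\mathcal{I}}$. Since $d_{\mathcal{I}}(p,q) \leq L_{\mathcal{I}}(\alpha_{p,q})$, the problem reduces to bounding the integrand $\|\alpha_{p,q}^{-1}\dot\alpha_{p,q}\|_2$ pointwise in terms of the two quantities $\|z\|_2$ and $\|\ln v\|_2$ that sum in quadrature to $d_{\mathcal{P}}(p,q)$.

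Writing $w := \ln v$ and using that $z$ commutes with $e^{tz}$ and $w$ with $e^{tw}$, a direct differentiation of $\alpha_{p,q}(t) = u_p e^{tz}|p|^{1/2}e^{tw}|p|^{1/2}$, together with the cancellations $u_p^{-1}u_p = 1$, $e^{-tz}ze^{tz} = z$, $|p|^{-1/2}|p|^{1/2} = 1$ and $e^{-tw}we^{tw} = w$, yields
$$\alpha_{p,q}^{-1}\dot\alpha_{p,q} \;=\; |p|^{-1/2}e^{-tw}|p|^{-1/2}\,z\,|p|^{1/2}e^{tw}|p|^{1/2} \;+\; |p|^{-1/2}w\,|p|^{1/2}.$$
Applying the standard inequality $\|AXB\|_2 \leq \|A\|\,\|B\|\,\|X\|_2$, the identities $\||p|^{\pm 1/2}\|^2 = \|p^{\pm 1}\|$, and the bound $\|e^{\pm tw}\| \leq e^{\|w\|}$ valid for $t\in[0,1]$, I obtain the pointwise estimate
$$\|\alpha_{p,q}^{-1}\dot\alpha_{p,q}\|_2 \;\leq\; \|p\|\,\|p^{-1}\|\,e^{2\|w\|}\,\|z\|_2 \;+\; \bigl(\|p\|\,\|p^{-1}\|\bigr)^{1/2}\,\|w\|_2.$$

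Integrating in $t$ and invoking the minimality results already cited---namely $\|z\|_2 = d_{\mathcal{I}}(u_p,u_q)$ in ${\rm U}_2(\mathcal{H})$ and $\|w\|_2 = d_{\mathfrak{p}}(|p|,|q|)$ in ${\rm GL}_2^+(\mathcal{H})$---followed by the Cauchy--Schwarz inequality applied to the two-term sum, gives
$$L_{\mathcal{I}}(\alpha_{p,q})^2 \;\leq\; \Bigl((\|p\|\,\|p^{-1}\|)^2\, e^{4\|w\|} + \|p\|\,\|p^{-1}\|\Bigr)\Bigl(d_{\mathcal{I}}(u_p,u_q)^2 + d_{\mathfrak{p}}(|p|,|q|)^2\Bigr).$$
The elementary bound $a^2+b^2 \leq 2\max(a^2,b^2)$ turns the first parenthesis into exactly $c(p,q)^2$, while the second parenthesis equals $d_{\mathcal{P}}(p,q)^2$ by the formula displayed just before the proposition, finishing the proof.

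The only delicate step is the computation of $\alpha_{p,q}^{-1}\dot\alpha_{p,q}$: the six factors must be shuffled so that the unitary and positive contributions decouple into the clean sum above. After that the norm estimate and the Cauchy--Schwarz step are mechanical. The factor $e^{4\|w\|}$ in $c(p,q)$ is unavoidable by this method, because transporting $|p|^{-1/2}$ past $e^{-tw}$ forces a conjugation of $z$ by $e^{tw}$, which is not norm-contractive on $\mathcal{B}_2(\mathcal{H})$ when $w\neq 0$.
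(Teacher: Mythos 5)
Your proposal is correct and follows essentially the same route as the paper: the same test curve $\alpha_{p,q}$, the same computation of $\alpha_{p,q}^{-1}\dot\alpha_{p,q}$, and the same operator-norm estimates leading to the constant $c(p,q)$. The only (immaterial) difference is that you combine the two terms via the triangle inequality plus Cauchy--Schwarz, whereas the paper uses the parallelogram rule on $\Vert\cdot\Vert_2^2$; both yield the identical bound.
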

\begin{proof} The proof is similar to Proposition 6.4 in \cite{Lopez Galvan}. If we differentiate ${\alpha}_{p,q}$ we have, $$\dot{\alpha}_{p,q}=u_pze^{tz}\vert p\vert^{1/2}e^{t\ln(v)}\vert p\vert^{1/2}+u_pe^{tz}\vert p\vert^{1/2}\ln(v)e^{t\ln(v)}\vert p\vert^{1/2}$$ and the inverse of the curve ${\alpha}_{p,q}$ is $$\alpha^{-1}_{p,q}=\vert p\vert^{-1/2}e^{-t\ln(v)}\vert p\vert^{-1/2}e^{-tz}u_p^{-1}.$$ After some simplifications we can write $$\alpha^{-1}_{p,q}\dot{\alpha}_{p,q}=\vert p\vert^{-1/2}e^{-t\ln(v)}\vert p\vert^{-1/2}z\vert p\vert^{1/2}e^{t\ln(v)}\vert p\vert^{1/2}+\vert p\vert^{-1/2}\ln(v)\vert p\vert^{1/2}.$$ Let  $x:=\vert p\vert^{1/2}e^{t\ln(v)}\vert p\vert^{1/2},$ taking the norm and using the parallelogram rule we have, 
%\begin{equation}
\begin{align}
\|\alpha^{-1}_{p,q}\dot{\alpha}_{p,q}\|^{2}_2 &= \|x^{-1}zx+\vert p\vert^{-1/2}\ln(v)\vert p\vert^{1/2}\|^{2}_2  \nonumber\\
& \leq 2\big (\|x^{-1}zx\|^{2}_2+\|\vert p\vert^{-1/2}\ln(v)\vert p\vert^{1/2}\|^{2}_2\big)  \nonumber\\
& \leq 2\big (\|x^{-1}\|^2 \ \|x\|^2 \ \|z\|^{2}_2+ \| \vert p \vert^{-1/2} \|^2 \ \| \ln(v) \|^{2}_2 \ \| \vert p\vert^{1/2}\|^2\big ) \label{des2} .
\end{align}
%\end{equation}
We can estimate  $\|x\|^2$ and $\|x^{-1}\|^2$ by $$\|x\|^2\leq \| \vert p\vert^{1/2}\|^4 \ e^{2\|\ln(v)\|}=\|p\|^2e^{2\|\ln(v)\|} $$
 and $$\|x^{-1}\|^2\leq \| \vert p\vert^{-1/2}\|^4 \ e^{2\|\ln(v)\|}=\|p^{-1}\|^2e^{2\|\ln(v)\|}.$$
If we define  $$c(p,q)^2=2\max \big\lbrace \ e^{4\|\ln(v)\|}  \big(\| p \|  \| p^{-1} \|\big)^2 ,   \| p \|  \| p^{-1}\| \big\rbrace. $$
from (\ref{des2}) and taking square roots we have, $$\|\alpha^{-1}_{p,q}\dot{\alpha}_{p,q}\|_2\leq c(p,q)\big(\|z\|^{2}_2+\| \ln(v) \|^{2}_2\big)^{1/2}=
c(p,q)d_{\mathcal{P}}(p,q),$$ then $$d_{\mathcal{I}}(p,q)\leq L_{\mathcal{I}}(\alpha_{p,q})\leq c(p,q)d_{\mathcal{P}}(p,q).$$

\end{proof}

\section{Riemannian geometry of a self-adjoint subgroup}

\subsection{Riemannian geometry with the left invariant metric}
It is known that the covariant derivative given by the left invariant Riemannian metric on the Banach-Lie group $ {\rm GL_2}(\mathcal{H})$ can be expressed as
\begin{equation}\label{conecinv}
\alpha^{-1}D_t\eta=\dot{\mu}+1/2\lbrace [\beta,\mu]+[\beta,\mu^{*}]+[\mu,\beta^{*}]\rbrace
\end{equation} 
 where $\alpha:(-\epsilon,\epsilon) \rightarrow  {\rm GL_2}(\mathcal{H})$ is any smooth curve, $\eta$ is a tangent field along $\alpha$ and $\beta=\alpha^{-1}\dot{\alpha}, \ \mu=\alpha^{-1}\eta \subset\mathcal{B}_2(\mathcal{H})$ are the fields at the identity (see \cite{Lopez Galvan} for a proof). 
\begin{prop}\label{geocompl} Let $G\subset {\rm GL}_2(\mathcal{H})$ with the left invariant metric (\ref{metinvizq}), then $G$ is totally geodesic. That is, if $\alpha \subset G$ is a curve and $\eta$ is a field along $\alpha$ then $$D_t\eta \in (TG)_\alpha=\alpha.\mathfrak{g}.$$
\end{prop}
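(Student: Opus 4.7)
The plan is to read off the result directly from the explicit formula (\ref{conecinv}) for the covariant derivative, using the two defining properties of the class under consideration: $\mathfrak{g}$ is a closed Banach-Lie subalgebra of $\mathcal{B}_2(\mathcal{H})$, and $\mathfrak{g}^* = \mathfrak{g}$ (because $G$ is self-adjoint). Concretely, if $\alpha \subset G$ and $\eta$ is a tangent field along $\alpha$, then by the left-trivialization the fields $\beta = \alpha^{-1}\dot\alpha$ and $\mu = \alpha^{-1}\eta$ take values in $\mathfrak{g}$.

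First I would check that the term $\dot{\mu}$ lies in $\mathfrak{g}$: this is immediate since $\mu(t) \in \mathfrak{g}$ and $\mathfrak{g}$ is a closed linear subspace of $\mathcal{B}_2(\mathcal{H})$, so its derivative (a limit of difference quotients in $\mathfrak{g}$) stays in $\mathfrak{g}$. Next I would observe that $[\beta,\mu] \in \mathfrak{g}$ because $\mathfrak{g}$ is a Lie subalgebra. For the two remaining commutators $[\beta,\mu^*]$ and $[\mu,\beta^*]$, the self-adjointness hypothesis $\mathfrak{g}^* = \mathfrak{g}$ gives $\beta^*, \mu^* \in \mathfrak{g}$, so these brackets also land in $\mathfrak{g}$.

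Putting these observations together in formula (\ref{conecinv}) shows that $\alpha^{-1}D_t\eta$ is a sum of elements of $\mathfrak{g}$, hence lies in $\mathfrak{g}$, so $D_t\eta \in \alpha\cdot\mathfrak{g} = (TG)_\alpha$. There is really no main obstacle here: the content of the proposition is that formula (\ref{conecinv}), derived for the ambient group ${\rm GL}_2(\mathcal{H})$, involves only the bracket and the adjoint, both of which preserve $\mathfrak{g}$ under the self-adjointness assumption; so the proof is just a three-line stability check rather than a computation.
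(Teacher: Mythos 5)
Your proposal is correct and follows essentially the same argument as the paper: both proofs verify term by term in formula (\ref{conecinv}) that $\dot{\mu}$ stays in the closed subspace $\mathfrak{g}$ and that the three brackets lie in $\mathfrak{g}$ using the subalgebra property together with $\mathfrak{g}^*=\mathfrak{g}$. No gaps; nothing to add.
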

\begin{proof} Let $\beta=\alpha^{-1}\dot{\alpha}$ and $\mu=\alpha^{-1}\eta$ be the fields at $\mathfrak{g}$, we will show that $\alpha^{-1}D_t\eta \subset \mathfrak{g}$. Indeed, since $\mu \subset \mathfrak{g}$ then it is clear that $\dot{\mu}$ belongs to $\mathfrak{g}$, because it is a limit of operators that belong to the closed algebra $\mathfrak{g}$. Since $G$ is self-adjoint, we have $\mathfrak{g}^*=\mathfrak{g}$, then $\mu^{*}$ and $\beta^{*}$ belong to $\mathfrak{g}$ and the brackets $[\beta,\mu],  [\beta,\mu^{*}],  [\mu,\beta^{*}]$ are all in $\mathfrak{g}$ because it is a closed Banach-Lie algebra. Then using equation (\ref{conecinv}) we have $\alpha^{-1}D_t\eta \subset \mathfrak{g}$.
\end{proof}

This shows that the Riemannian connection given by the left invariant metric in the group $G$ matches the one of ${\rm GL_2}(\mathcal{H})$. In particular, the geodesics of $G$ are the same than those of ${\rm GL_2}(\mathcal{H})$; if $g_0 \in G$ and $g_0v_0 \in g_0.\mathfrak{g}$ are the initial position and the initial velocity then  $$\alpha(t)=g_0e^{tv_0^{*}}e^{t(v_0-v_0^{*})}\subset G$$ satisfies $D_t\dot{\alpha}=0$ (see \cite{Andruchow1}). In this context the Riemannian exponential for $g \in G$ is $$Exp_g(v)=ge^{v^{*}}e^{v-v^{*}}$$ with $v \in \mathfrak{g}$. 

\subsection{Riemannian geometry with the polar metric}
The next theorem summarizes the most important properties of self-adjoint Banach-Lie groups. It was proved by G. Larotonda in \cite{Larotonda}. 
\begin{teo}\label{propself} Let $G=\langle \exp(\mathfrak{g})\rangle$ be a connected, self-adjoint Banach-Lie group with Banach-Lie algebra $\mathfrak{g}\subset \mathcal{B}_2(\mathcal{H})$. Let $P$ be the analytic map $g \mapsto g^*g$, $P :G \rightarrow G$. Let $\mathfrak{k}=\ker (d_1P)$, $\mathfrak{m}=\mbox{Ran}(d_1P)$. Let $M_G=\exp(\mathfrak{m})$ and $K=G\cap {\rm U}_2(\mathcal{H})=P^{-1}(1)$. Then \\

1. The set $\mathfrak{m}$ is a closed Lie triple system. We have  $[\mathfrak{m},\mathfrak{m}]\subset \mathfrak{l},  [\mathfrak{k},\mathfrak{m}]\subset \mathfrak{m},  [\mathfrak{k},\mathfrak{k}] \subset \mathfrak{k}$ and $\mathfrak{g}=\mathfrak{k}\oplus \mathfrak{m}$. In particular $\mathfrak{k}$ is a Banach-Lie subalgebra of $\mathfrak{g}$.\\

2. $P(G)=M_G$ and $M_G$ is a geodesically convex submanifold of ${\rm GL}_2^+(\mathcal{H})$.\\

3. For any $g=u_g\vert g \vert$ (polar decomposition), we have $\vert g\vert \in M_G$ and $u_g \in K$.\\

4. Let $g\in G$, $p \in M_G, I_g(p) = gpg^*$. Then $I_g \in I(M_G)$ (the group of isometries of $M_G$). If $g=p^{\frac{1}{2}}(p^{-\frac{1}{2}}qp^{-\frac{1}{2}})^{\frac{1}{2}}p^{\frac{1}{2}}$, then $I_g(p)=q$, namely $G$ acts isometrically and transitively on $M_G$.\\

5. Let $u \in K$ and $x \in \mathfrak{m}$ (resp. $\mathfrak{m}^{\perp}$). Then $I_u(x) = uxu^* \in \mathfrak{m}$ (resp. $\mathfrak{m}^{\perp}$). If $p, q \in M_G$ then $I_p$ maps $T_qM_G$ (resp. $T_qM_G^{\perp}$) isometrically onto $T_{I_p(q)}M_G$ (resp. $T_{I_p(q)}M_G^{\perp}$).\\

6. The group $K$ is a Banach-Lie subgroup of $G$ with Lie algebra $\mathfrak{k}$.\\

7. $G \simeq K \times  M_G$ as Hilbert manifolds. In particular $K$ is connected and $G/K \simeq M_G$.\\   
\end{teo}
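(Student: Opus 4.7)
The plan is to derive the seven assertions in the order they are stated, since each one feeds into the next. First I would establish the algebraic decomposition in (1) by analysing $P(g)=g^*g$ near the identity. Writing $g=1+x+o(x)$ with $x\in\mathfrak{g}$, a direct expansion gives $d_1P(x)=x+x^*$, so $\mathfrak{k}:=\ker(d_1P)$ coincides with the anti-Hermitian part of $\mathfrak{g}$ and $\mathfrak{m}:=\mathrm{Ran}(d_1P)$ with the Hermitian part. The self-adjointness hypothesis $\mathfrak{g}^*=\mathfrak{g}$ ensures both subspaces lie inside $\mathfrak{g}$ and that every $x\in\mathfrak{g}$ splits as $\tfrac12(x-x^*)+\tfrac12(x+x^*)$, giving $\mathfrak{g}=\mathfrak{k}\oplus\mathfrak{m}$. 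The bracket relations then follow from the elementary observation that $[a,b]$ is anti-Hermitian when both $a,b$ are Hermitian or both anti-Hermitian, and Hermitian in the mixed case; closedness of $\mathfrak{k}$ and $\mathfrak{m}$ in $\mathfrak{g}$ is inherited from the continuity of the involution.

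For (3) I would show $|g|\in G$ for every $g\in G$: since $G=G^*$ is a closed subgroup, $g^*g\in G\cap {\rm GL}_2^+(\mathcal{H})$, and the holomorphic functional calculus applied on the spectrum of $g^*g$ (which lies in a compact subset of $(0,\infty)$) expresses $\ln(g^*g)$ as a convergent series in powers of $(g^*g-1)\in\mathfrak{g}$, hence $\ln(g^*g)\in\mathfrak{g}$; being Hermitian, it actually lies in $\mathfrak{m}$. Therefore $|g|=\exp(\tfrac12\ln g^*g)\in\exp(\mathfrak{m})=M_G$, and $u_g=g|g|^{-1}\in G\cap {\rm U}_2(\mathcal{H})=K$. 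The same computation proves $P(G)=M_G$ in (2). For the geodesic convexity of $M_G$ inside $({\rm GL}_2^+(\mathcal{H}),\mathfrak{p})$ one must verify that the curve $\gamma_{pq}(t)=p^{1/2}(p^{-1/2}qp^{-1/2})^tp^{1/2}$ stays in $M_G$ whenever $p,q\in M_G$; here one uses that $\mathfrak{m}$ is a \emph{Lie triple system} (which follows from $[\mathfrak{m},\mathfrak{m}]\subset\mathfrak{k}$ and $[\mathfrak{k},\mathfrak{m}]\subset\mathfrak{m}$), together with functional calculus, to conclude that $p^{-1/2}qp^{-1/2}=\exp(y)$ for some $y\in\mathfrak{m}$ and that $p^{1/2}e^{ty}p^{1/2}\in M_G$ for every $t$.

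For (4), $I_g(p)=gpg^*$ is an isometry of $\mathfrak{p}$ because $(gpg^*)^{-1/2}g\dot p g^*(gpg^*)^{-1/2}$ is a unitary conjugate of $p^{-1/2}\dot pp^{-1/2}$ after a polar decomposition, and the 2-norm is unitarily invariant; transitivity is then a direct verification with the explicit $g$ given, reducing to the case $p=1$ by acting by $I_{p^{-1/2}}$. Assertion (5) follows from the bracket rules of (1) (to see $u\mathfrak{m}u^*\subset\mathfrak{m}$) and from the cyclicity of the trace (to see $u\mathfrak{m}^\perp u^*\subset\mathfrak{m}^\perp$). For (6), $K=G\cap {\rm U}_2(\mathcal{H})=P^{-1}(1)$ is a closed subgroup of $G$ and a direct check shows its one-parameter subgroups are exactly $\{e^{tx}:x\in\mathfrak{k}\}$; the standard closed-subgroup theorem for Banach-Lie groups then promotes $K$ to a Banach-Lie subgroup with algebra $\mathfrak{k}$.

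Finally, for (7) I would use the global polar decomposition $g\mapsto(u_g,|g|)$, which by (3) takes values in $K\times M_G$; smoothness of both directions follows from the analyticity of the functional calculus on ${\rm GL}_2^+(\mathcal{H})$ and the fact that $G$ is a Banach-Lie subgroup of ${\rm GL}_2(\mathcal{H})$, where the analogous global polar diffeomorphism is known. The main obstacle in the whole argument is the geodesic convexity of $M_G$ in step (2): establishing that $M_G$ is simultaneously a submanifold of ${\rm GL}_2^+(\mathcal{H})$ and closed under the ambient geodesic flow requires identifying the tangent space $T_pM_G$ with $p^{1/2}\mathfrak{m}p^{1/2}$ at each $p=e^m\in M_G$ and verifying, via the Lie triple system structure, that the exponential of $\mathfrak{m}$ coincides with $P(G)$. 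Once this is in place, items (4)--(7) are comparatively routine consequences of the decomposition $\mathfrak{g}=\mathfrak{k}\oplus\mathfrak{m}$ and the properties of the ambient metrics $\mathcal{I}$ and $\mathfrak{p}$.
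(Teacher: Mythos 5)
You should first note that the paper does not prove this theorem at all: it is quoted verbatim from the cited work of Larotonda \cite{Larotonda}, so the only thing to assess is your sketch on its own terms. Item (1) of your argument is fine. The serious gap is in your treatment of (2)--(3), which is precisely the hard core of the statement. You claim $\ln(g^*g)\in\mathfrak{g}$ because $\ln(g^*g)$ is ``a convergent series in powers of $(g^*g-1)\in\mathfrak{g}$''. This fails on three counts: membership $g\in G$ gives $g-1\in\mathcal{B}_2(\mathcal{H})$ but not $g-1\in\mathfrak{g}$ (the group is not the set $1+\mathfrak{g}$); $\mathfrak{g}$ is only a Lie subalgebra of $\mathcal{B}_2(\mathcal{H})$, not an associative subalgebra, so it is not stable under the powers $(g^*g-1)^n$ and a power series in an element of $\mathfrak{g}$ has no reason to land in $\mathfrak{g}$; and the logarithm series only converges when $\Vert g^*g-1\Vert<1$, which is false for general $g\in G$. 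Consequently your proofs of $P(G)=M_G$, of $\vert g\vert\in M_G$, and hence of (7), all rest on an unjustified step. In the cited proof the logic runs the other way: one first shows that $M_G=\exp(\mathfrak{m})$ is a closed, geodesically convex submanifold of ${\rm GL}_2^+(\mathcal{H})$ (using the Lie triple system structure of $\mathfrak{m}$ and the nonpositively curved geometry of the positive cone), then checks that the isometries $I_g$ with $g\in\exp(\mathfrak{g})$ preserve $M_G$, so that $P(g)=I_{g^*}(1)\in M_G$ for every word $g\in\langle\exp(\mathfrak{g})\rangle$; the existence of a logarithm of $\vert g\vert$ in $\mathfrak{m}$ is a consequence of this, not an input obtainable by functional calculus inside $\mathfrak{g}$.

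A second genuine error is in (6): you invoke ``the standard closed-subgroup theorem for Banach-Lie groups''. Cartan's closed subgroup theorem is false in infinite dimensions; a closed subgroup of a Banach-Lie group need not be a Banach-Lie subgroup, so this appeal proves nothing. To give $K=G\cap{\rm U}_2(\mathcal{H})=P^{-1}(1)$ a Banach-Lie subgroup structure with algebra $\mathfrak{k}$ one must use specific structure, for instance that $K$ is the fixed point set of the involutive automorphism $g\mapsto(g^*)^{-1}$ of $G$, or obtain it together with the factorization $G\simeq K\times M_G$ coming from the polar map and the splitting $\mathfrak{g}=\mathfrak{k}\oplus\mathfrak{m}$. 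Items (4) and (5) of your sketch are essentially correct, and (7) is fine once (2), (3) and (6) are properly established.
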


Since $M_G = \exp(\mathfrak{m})$ is closed and a geodesically convex submanifold of ${\rm GL}_2^+(\mathcal{H})$,  then for any $p=e^x \in M_G$,  $$T_pM_G=Exp^{-1}_p(M_G)=\lbrace p^{1/2}\ln(p^{-1/2}qp^{-1/2})p^{1/2}: q \in M_G\rbrace.$$ For this reason it is clear, as in the case of the full space ${\rm GL}_2^+(\mathcal{H})$, that given any $p,q \in M_G$ the curve $$\gamma_{pq}(t)=p^{1/2}(p^{-1/2}qp^{-1/2})^tp^{1/2}\subset M_G$$ has minimal length among all curves in $M_G$ that join $p$ to $q$. Its length is measured with the metric (\ref{metpos}) and it is $\Vert \ln(p^{-1/2}qp^{-1/2})\Vert_2$.
Moreover the metric space $(M_G,d_{\mathfrak{p}})$ is complete.

The diffeomorphism given in point seven of the Theorem \ref{propself} is the restriction of the map (\ref{difeopolar}) on $G$. So, we can endow $G$ with the polar Riemannian metric (\ref{metpolar}) using the product manifold  $K \times  M_G$. 

If we consider a curve $\alpha \subset K$ and $\eta$ is a tangent field along $\alpha$ then the covariant derivative (\ref{conecinv}) given by the left invariant metric is reduced to $$\alpha^{-1}D_t\eta=\dot{\mu}+1/2[\beta,\mu] \in \mathfrak{k}.$$ Therefore, the geodesics of the Banach-Lie subgroup $K$ with the left invariant metric are one-parameter groups. 

Using the above facts, it is not difficult to see that given any initial velocity $v\in \mathfrak{g}$, if $v=x+y$ is the decomposition into $\mathfrak{k}\oplus \mathfrak{m}$, then the geodesics of the polar metric starting at the identity are $$\alpha(t)=e^{tx}e^{ty}.$$

\begin{prop} The geodesics of the left invariant metric coincide with the geodesics of the polar metric if the initial velocity $v \in \mathfrak{g}$ is normal.
\end{prop}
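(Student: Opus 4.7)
The plan is to write down both geodesic curves through the identity with initial velocity $v$ explicitly, and check they agree precisely because the $\mathfrak{k}$- and $\mathfrak{m}$-components of $v$ commute whenever $v$ is normal.

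First I would decompose $v = x+y$ with $x \in \mathfrak{k}$ and $y \in \mathfrak{m}$. Since $K \subset {\rm U}_2(\mathcal{H})$ we have $\mathfrak{k}\subset \mathcal{B}_2(\mathcal{H})_{ah}$, and since $\mathfrak{m} = \mathrm{Ran}(d_1 P)$ for $P(g) = g^*g$, we have $\mathfrak{m}\subset \mathcal{B}_2(\mathcal{H})_h$. Hence $x^* = -x$, $y^* = y$, so $v^* = -x+y$ and $v - v^* = 2x$. Feeding this into the formula $\alpha(t)=e^{tv^*}e^{t(v-v^*)}$ for the left invariant geodesic recalled in \S3.1, and into the formula $\beta(t) = e^{tx}e^{ty}$ for the polar geodesic recalled just before the proposition, yields
\[
\alpha(t) = e^{t(-x+y)}\, e^{2tx}, \qquad \beta(t) = e^{tx}\, e^{ty}.
\]

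Second I would record the key algebraic observation: $v$ normal is equivalent to $[x,y]=0$. Expanding
\[
vv^* - v^*v = (x+y)(-x+y) - (-x+y)(x+y) = 2(xy - yx),
\]
so $vv^* = v^*v$ iff $xy = yx$.

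Assuming now $xy = yx$, the functional calculus implies $e^{sx}$ and $e^{ty}$ commute for all $s,t$. Then $e^{t(-x+y)} = e^{-tx}e^{ty}$ and $e^{ty}e^{2tx} = e^{2tx}e^{ty}$, so
\[
\alpha(t) = e^{-tx}e^{ty}e^{2tx} = e^{-tx}e^{2tx}e^{ty} = e^{tx}e^{ty} = \beta(t),
\]
which is exactly what is required. I do not foresee a substantial obstacle; the only place where care is needed is matching conventions for $\mathfrak{k}$ versus $\mathfrak{m}$ (anti-hermitian vs.\ hermitian) so that the formula $e^{tv^*}e^{t(v-v^*)}$ reduces cleanly to the two exponential factors, and invoking the product-of-exponentials identity from commutativity rather than the full Baker--Campbell--Hausdorff series.
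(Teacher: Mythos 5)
Your proof is correct and takes essentially the same route as the paper: decompose $v=x+y\in\mathfrak{k}\oplus\mathfrak{m}$ (anti-hermitian plus hermitian), note that normality of $v$ is equivalent to $[x,y]=0$, and commute the exponentials so that both geodesics reduce to the one-parameter group $e^{tv}=e^{tx}e^{ty}$. You merely spell out the computation the paper calls ``straightforward,'' so there is nothing to add.
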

\begin{proof}
Let $v=x+y \in \mathfrak{k}\oplus \mathfrak{m}$ be the decomposition into its hermitic and anti-hermitic part, since $v$ is normal a straightforward computation shows that $x$ commutes with $y$, thus we have $$e^{tv^{*}}e^{t(v-v^{*})}=e^{tv}=e^{tx}e^{ty}.$$This equation shows that the geodesics are one-parameter groups.    
\end{proof}

\section{Completeness of the geodesic distance}
\subsection{Completeness in finite dimension with p-norms}
Let ${\rm GL}_n(\mathbb{C})$ be the general linear group in finite dimension. Let $\mathcal{M}_n(\mathbb{C})$ be the space of $n\times n$ complex matrices. Since ${\rm GL}_n(\mathbb{C})$ is open in the space $\mathcal{M}_n(\mathbb{C})$, we can identify the tangent space of ${\rm GL}_n(\mathbb{C})$ at any point with $\mathcal{M}_n(\mathbb{C})$. In this algebra we consider the classical {\it p-norms}, if $x\in \mathcal{M}_n(\mathbb{C})$ and $\tau$ denote the real part of the trace we put, $$\Vert x\Vert_p^p=\tau\big((x^*x)^{p/2}\big)  \  \mbox{for any} \ p\geq 1.$$
Since the dimension of ${\rm GL}_n(\mathbb{C})$ is finite, it is known that any closed subgroup $G$ has a structure of Banach-Lie subgroup of ${\rm GL}_n(\mathbb{C})$. In this context we denote the left invariant metric for any self-adjoint closed subgroup as $\mathcal{I}_p(g,v)=\Vert g^{-1}v\Vert_p$ for $g\in G$ and $v\in T_gG$.
\begin{teo} The space $(G,d_{\mathcal{I}_p})$ is a complete metric space.
\end{teo}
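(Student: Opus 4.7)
The plan is to exploit that in finite dimension all norms on $\mathcal{M}_n(\mathbb{C})$ are equivalent; in particular, there is a constant $C_n$ with $\Vert\cdot\Vert \leq C_n\Vert\cdot\Vert_p$. Given a $d_{\mathcal{I}_p}$-Cauchy sequence $(g_k)\subset G$, I would prove completeness in three stages: first bound $\Vert g_k\Vert$ and $\Vert g_k^{-1}\Vert$ uniformly, then deduce that $(g_k)$ is Cauchy in the ambient $p$-norm and extract a limit $g\in G$, and finally verify that $d_{\mathcal{I}_p}(g_k,g)\to 0$.

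The main technical input is a Gronwall-type estimate. For any piecewise smooth curve $\alpha\colon[0,1]\to G$, write $\beta=\alpha^{-1}\dot\alpha$ and $L=L_{\mathcal{I}_p}(\alpha)$. The submultiplicative inequality $\Vert\dot\alpha\Vert \leq C_n\Vert\alpha\Vert\,\Vert\beta\Vert_p$, combined with $\Vert\alpha(t)\Vert\leq \Vert\alpha(0)\Vert+\int_0^t\Vert\dot\alpha(s)\Vert\,ds$ and Gronwall's lemma, gives
$$\Vert\alpha(t)\Vert \leq \Vert\alpha(0)\Vert\,\exp(C_n L),$$
and, using the identity $\frac{d}{dt}\alpha^{-1}=-\beta\alpha^{-1}$, the symmetric bound $\Vert\alpha(t)^{-1}\Vert\leq \Vert\alpha(0)^{-1}\Vert\,\exp(C_n L)$. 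Integrating $\dot\alpha=\alpha\beta$ once more produces the key ambient estimate
$$\Vert\alpha(1)-\alpha(0)\Vert_p \leq \Vert\alpha(0)\Vert\,L\,\exp(C_n L).$$

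Applying these to curves from $g_0$ to $g_k$ of length below $d_{\mathcal{I}_p}(g_0,g_k)+1$ (uniformly bounded since the sequence is Cauchy) bounds $\Vert g_k\Vert$ and $\Vert g_k^{-1}\Vert$ by a constant $M$ independent of $k$. For large $k,m$, curves of length close to $d_{\mathcal{I}_p}(g_k,g_m)$ then give $\Vert g_k-g_m\Vert_p\to 0$, so $g_k\to g$ in $\Vert\cdot\Vert_p$ for some $g\in\mathcal{M}_n(\mathbb{C})$. The identity $g_k^{-1}-g_m^{-1}=g_k^{-1}(g_m-g_k)g_m^{-1}$, together with the bound on $\Vert g_k^{-1}\Vert$, shows $(g_k^{-1})$ is also Cauchy; passing to the limit in $g_kg_k^{-1}=1$ yields $g\in {\rm GL}_n(\mathbb{C})$, and $g\in G$ because $G$ is closed.

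For the final step I would use that $G$ is a Banach-Lie subgroup, so $\exp\colon\mathfrak{g}\to G$ is a local diffeomorphism at $0$; since $g_k^{-1}g\to 1$ in norm, for large $k$ one can write $g_k^{-1}g=\exp(w_k)$ with $w_k\in\mathfrak{g}$ and $\Vert w_k\Vert_p\to 0$. The curve $t\mapsto g_k\exp(tw_k)$ lies in $G$, joins $g_k$ to $g$, and has $\mathcal{I}_p$-length $\Vert w_k\Vert_p$, so $d_{\mathcal{I}_p}(g_k,g)\to 0$. The main obstacle is the Gronwall estimate coupled with the uniform bound on $\Vert g_k\Vert$: this is precisely where finite dimensionality enters, since the equivalence constant $C_n$ between $\Vert\cdot\Vert$ and $\Vert\cdot\Vert_p$ has no analogue in infinite dimensions for $p>2$, which is why a different strategy is required for the $\mathcal{B}_2(\mathcal{H})$ setting treated in the rest of the paper.
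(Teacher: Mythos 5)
Your argument is correct, but it follows a genuinely different route from the paper. The paper's proof is a two-line reduction: for $p=2$ the group is geodesically complete because the geodesics $g e^{tv^*}e^{t(v-v^*)}$ stay in $G$ for all time (this is where self-adjointness enters, via Proposition \ref{geocompl}), so finite-dimensional Hopf--Rinow gives completeness of $(G,d_{\mathcal{I}_2})$; then the equivalence of the $p$-norm and the $2$-norm on $\mathcal{M}_n(\mathbb{C})$, applied to the length functionals, makes $d_{\mathcal{I}_p}$ and $d_{\mathcal{I}_2}$ equivalent and transfers completeness. You instead give a direct metric-space argument: Gronwall bounds on $\Vert\alpha(t)\Vert$ and $\Vert\alpha(t)^{-1}\Vert$ along curves of controlled $\mathcal{I}_p$-length, the resulting ambient estimate $\Vert\alpha(1)-\alpha(0)\Vert_p\leq\Vert\alpha(0)\Vert L e^{C_nL}$, closedness of $G$ to locate the limit, and the exponential chart of the (Cartan) Lie subgroup to produce short connecting curves $t\mapsto g_k\exp(tw_k)$ of length $\Vert w_k\Vert_p$. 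Your route buys more than the paper's: it never uses self-adjointness (so it proves completeness for every closed subgroup of ${\rm GL}_n(\mathbb{C})$), it avoids Hopf--Rinow, and it is in fact the same strategy the paper must use later in infinite dimensions, where Hopf--Rinow is unavailable (compare the Lemma that $d_{\mathcal{I}}$-Cauchy implies $\Vert\cdot\Vert_2$-Cauchy); the paper's route buys brevity, since it leans on Proposition \ref{geocompl} and the known explicit geodesics. One small inaccuracy in your closing remark: for these trace norms one has $\Vert x\Vert\leq\Vert x\Vert_p$ with constant $1$ even in infinite dimensions, so the constant $C_n$ is not really where finite-dimensionality is essential; this does not affect the validity of your proof, but the genuine infinite-dimensional obstructions lie elsewhere (e.g.\ which ideal the group lives in and how norm convergence is upgraded to geodesic convergence), which is what the rest of the paper addresses.
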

\begin{proof} If $p=2$, by Hopf-Rinow's theorem, the space $(G,d_{\mathcal{I}_2})$ is complete since the manifold $G$ is geodesically complete with the 2-norm (Proposition \ref{geocompl}). Now, we claim that $d_{\mathcal{I}_p}$ is equivalent to $d_{\mathcal{I}_2}$ for any $p\geq 2$. Indeed, at each tangent space of $G$, the $p$-norm is equivalent with the 2-norm with constants which depend only on the dimension of $\mathcal{M}_n(\mathbb{C})$. Examining the length functionals, it follows that the metrics are equivalent, with the same constants.
\end{proof}

\subsection{Completeness in the infinite dimensional case}

Since the map $g\mapsto g\vert g \vert ^{-1}=u_g$ is continuous, it is clear that if $p,q \in G$ are close its unitary parts ($u_p, u_q$) are close too. So, if $g \in G$ is close to the identity 1, then its unitary part $u_g$ is close to 1 too and since the polar decomposition is in the group (Theorem \ref{propself}) we can assume that $u_g$ lies in $K\cap U$ where $U$ is a neighbourhood of the identity in $G$. Since $K$ is a Banach-Lie subgroup of $G$, if we reduce the neighbourhood $U$, we can assume that $u_g=\exp(z)$ where $z$ belongs to a neighbourhood of $0$ in the Lie algebra $\mathfrak{k}\subset \mathfrak{g}$.
Now, if $p,q \in G$ are close and $u_p\vert p\vert, u_q\vert q\vert$ are their polar decompositions, then the unitary element $u_p^{-1}u_q \in K$ is close to 1 and we can choose an element $z\in \mathfrak{k}\subset \mathfrak{g}$ such that $u_p^{-1}u_q=e^z$. So, we can build the following smooth curve in $G$;
\begin{equation}\label{alphapq} 
\alpha_{p,q}(t)=u_pe^{tz}\vert p\vert^{1/2}({\vert p\vert}^{-1/2}\vert q\vert {\vert p\vert}^{-1/2})^t\vert p\vert^{1/2}\subset G
\end{equation}
that joins $p$ to $q$. This curve will be the key of the completeness with both metrics. 

\begin{teo} The metric space $(G,d_{\mathcal{P}})$ is complete.
\end{teo}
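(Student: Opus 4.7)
The plan is to replicate the structure of the completeness proof for $({\rm GL}_2(\mathcal{H}), d_\mathcal{P})$ given earlier, now exploiting the diffeomorphism $G \simeq K \times M_G$ of Theorem \ref{propself}(7). Since the polar metric on $G$ is by construction the product of the left-invariant metric on $K$ and the positive metric $\mathfrak{p}$ on $M_G$, the first step is to establish the projection inequality
$$d_{\mathcal{P}}(p,q)^2 \geq d_{\mathcal{I}}(u_p, u_q)^2 + d_{\mathfrak{p}}(|p|, |q|)^2 \qquad (p,q \in G).$$
This follows verbatim from the Minkowski argument in the proof of Theorem \ref{minpolar}: any curve $\beta\subset G$ polar-decomposes as $\beta_1\beta_2$ with $\beta_1\subset K$ and $\beta_2\subset M_G$ by Theorem \ref{propself}(3), giving $L_\mathcal{P}(\varphi\circ\beta)^2 \geq L_\mathcal{I}(\beta_1)^2 + L_\mathfrak{p}(\beta_2)^2$, and the inequality follows on taking infima.

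Let $(x_n)\subset G$ be a $d_\mathcal{P}$-Cauchy sequence with polar decompositions $x_n = u_{x_n}|x_n|$. The projection inequality makes $(u_{x_n})$ a $d_\mathcal{I}$-Cauchy sequence in $K$ and $(|x_n|)$ a $d_\mathfrak{p}$-Cauchy sequence in $M_G$. Completeness of $(M_G, d_\mathfrak{p})$ is already noted right after Theorem \ref{propself}, so $|x_n|\to g\in M_G$. For the unitary factor, I would argue that $(K,d_\mathcal{I})$ is complete: $K$ is a closed self-adjoint Banach-Lie subgroup of ${\rm U}_2(\mathcal{H})$ whose geodesics, by Proposition \ref{geocompl} applied to $K$, are the one-parameter subgroups $e^{tz}$ with $z\in\mathfrak{k}$, and these are Andruchow-minimal in ${\rm U}_2(\mathcal{H})$ for $\|z\|\leq\pi$, so the intrinsic $d_\mathcal{I}$ of $K$ agrees locally with the restriction of $d_\mathcal{I}$ from ${\rm U}_2(\mathcal{H})$. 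Combining the completeness of ${\rm U}_2(\mathcal{H})$ (Andruchow, cited in the proof of Theorem \ref{minpolar}) with the $\|\cdot\|_2$-closedness of $K$ as a Banach-Lie subgroup yields $u_{x_n}\to u\in K$ in $\|\cdot\|_2$, and writing $u^{-1}u_{x_n}=e^{z_n}$ with $z_n\in\mathfrak{k}$ tending to $0$ upgrades this to $d_\mathcal{I}$-convergence inside $K$.

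Setting $x:=ug\in G$, I close the argument with the companion upper bound furnished by the curve $\alpha_{x_n,x}$ of (\ref{alphapq}):
$$d_\mathcal{P}(x_n, x) \leq L_\mathcal{P}(\varphi\circ\alpha_{x_n,x}) = \bigl(\|z_n\|_2^2 + \|\ln v_n\|_2^2\bigr)^{1/2},$$
with $v_n=|x_n|^{-1/2}g|x_n|^{-1/2}$; the right-hand side tends to zero since $z_n\to 0$ in $\|\cdot\|_2$ and $\|\ln v_n\|_2 = d_\mathfrak{p}(|x_n|,g)\to 0$. The main obstacle is the completeness of $(K, d_\mathcal{I})$: a priori the intrinsic geodesic distance of the subgroup could be strictly larger than the restriction of the ${\rm U}_2(\mathcal{H})$-distance, and reconciling the two requires the total-geodesic property from Proposition \ref{geocompl} together with Andruchow's minimality for one-parameter subgroups, which together guarantee that $\|\cdot\|_2$-limits of $d_\mathcal{I}$-Cauchy sequences in $K$ are actual $d_\mathcal{I}$-limits in $K$.
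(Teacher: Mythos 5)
Your argument is essentially the paper's own proof: the Minkowski/projection inequality forces the unitary and positive parts to be $d_{\mathcal{I}}$- and $d_{\mathfrak{p}}$-Cauchy, completeness of $(M_G,d_{\mathfrak{p}})$ and the $\Vert\cdot\Vert_2$-closedness of $K$ (with the exponential chart of the Banach-Lie subgroup giving $z_n\in\mathfrak{k}$, $z_n\to 0$) produce the candidate limit $ug$, and the explicit curve (\ref{alphapq}) gives $d_{\mathcal{P}}(x_n,ug)\leq\bigl(\Vert z_n\Vert_2^2+\Vert\ln v_n\Vert_2^2\bigr)^{1/2}\to 0$. The only deviation is your digression on completeness of $(K,d_{\mathcal{I}})$ and the intrinsic-versus-restricted distance on $K$, which is not actually needed: the final estimate via $\alpha_{x_n,x}$ bounds $d_{\mathcal{P}}$ directly, so $\Vert\cdot\Vert_2$-convergence of $u_{x_n}$ in $K$ plus $z_n\to 0$ suffices, exactly as in the paper.
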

\begin{proof} Let $(x_n)$ be a $d_{\mathcal{P}}$-Cauchy sequence, let $x_n=u_{x_n}\vert x_n\vert$ be its polar decomposition. First we will prove that $u_{x_n}\subset K$ and $\vert x_n\vert\subset M_G$ are $d_{\mathcal{I}}$ and $d_{\mathfrak{p}}$-Cauchy sequences respectively. Indeed, given $\varepsilon=1/n$ there exist curves $\beta_n \subset G$ such that $\beta_n(0)=x_n, \ \beta_n(1)=x_m$ and $d_{\mathcal{P}}(x_n,x_m)+1/n > L_{\mathcal{P}}(\beta_n)$. If $\beta_{1n}\subset K$ and $\beta_{2n}\subset M_G$ denote the unitary and positive part of $\beta_n$, then since $x_n=\beta_n(0)=\beta_{1n}(0)\beta_{2n}(0)=u_{x_n}\vert x_n\vert$, it is clear that $\beta_{1n}$ joins $u_{x_n}$ to $u_{x_m}$ and $\beta_{2n}$ joins $\vert x_n\vert$ to $\vert x_m\vert$. Using the inequality (\ref{desunpos}) we have, $$d_{\mathcal{P}}(x_n,x_m)+1/n > L_{\mathcal{P}}(\beta_n)\geq L_{\mathcal{I}}(\beta_{1n})\geq d_{\mathcal{I}}(u_{x_n},u_{x_m})$$ for all $n,m$, then it is clear that $d_{\mathcal{I}}(u_{x_n},u_{x_m})\rightarrow 0$ when $n,m\rightarrow \infty$. An analogous computation using $\beta_{2n}$ shows that $d_{\mathfrak{p}}(\vert x_n\vert,\vert x_m\vert)\rightarrow 0$ when $n,m\rightarrow \infty$. Since $(u_{x_n})$ is an unitary sequence it is known that $\Vert u_{x_n} - u_{x_m}\Vert_2\leq d_\mathcal{I}(u_{x_n}, u_{x_m})$ and therefore $(u_{x_n})\subset K$ is a 2-norm Cauchy sequence. Since $K$ is closed we can take $u\in K$ such that $ u_{x_n} \stackrel{\Vert .\Vert_2}\longrightarrow u$. Then if $n$ is large, we can suppose that $u^{-1}u_{x_n}$ is close to 1, therefore there exists a sequence $(z_n)\subseteq \mathfrak{k}$ such that $u^{-1}u_{x_n}=e^{z_n}$ and $z_n \stackrel{\Vert .\Vert_2}\longrightarrow 0$. On the other hand, there exists $g\in M_G$ such that $d_{\mathfrak{p}}(\vert x_n \vert,g)=\Vert\ln(g^{-1/2}\vert x_n\vert g^{-1/2})\Vert_2 \rightarrow 0$. It is clear that $ug\in G$, then if $n$ is large we can consider the curve $\alpha_{ug,x_n}(t)=ue^{tz_n}g^{1/2}(g^{-1/2}\vert x_n\vert g^{-1/2})^tg^{1/2}$ (\ref{alphapq}) that joins $x_n$ and $ug$, therefore we have 
$$d_{\mathcal{P}}(x_n,ug)\leq L_{\mathcal{P}}(\alpha_{ug,x_n})=\big ( \Vert z_n\Vert_2^2+\Vert \ln(g^{-1/2}\vert x_n\vert g^{-1/2})\Vert_2^2 \big)^{1/2} \rightarrow 0.$$
\end{proof}

The following proposition is a generalization of Proposition \ref{despol} on $G$.  
\begin{prop}\label{des} Suppose $p,q \in G$ are close and let $v:=\vert p\vert^{-1/2} \vert q\vert\vert p\vert^{-1/2}$ then we can estimate the geodesic distance $d_{\mathcal{I}}$ by $$d_{\mathcal{I}}(p,q)\leq c(p,q)\big(\|z\|^{2}_2+\| \ln(v) \|^{2}_2\big)^{1/2}$$ where $$c(p,q)^2=2\max\big\lbrace \ e^{4\|\ln(v)\|}  \big(\| p \|  \| p^{-1} \|\big)^2 ,   \| p \|  \| p^{-1}\| \big\rbrace.$$ 
\end{prop}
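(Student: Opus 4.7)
The plan is to mirror the proof of Proposition \ref{despol} almost verbatim, the new content being that one must stay inside $G$. Concretely, since $p$ and $q$ are close, the discussion preceding equation (\ref{alphapq}) provides an element $z \in \mathfrak{k} \subset \mathfrak{g}$ with $u_p^{-1}u_q = e^z$, and Theorem \ref{propself}.2 guarantees that the positive factor $|p|^{1/2}(|p|^{-1/2}|q||p|^{-1/2})^t|p|^{1/2}$ remains in the geodesically convex submanifold $M_G$. Thus the curve $\alpha_{p,q}$ from (\ref{alphapq}) is a legitimate smooth curve in $G$ joining $p$ to $q$, and the goal is to bound $L_{\mathcal{I}}(\alpha_{p,q})$, since $d_{\mathcal{I}}(p,q)\le L_{\mathcal{I}}(\alpha_{p,q})$.

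First I would differentiate $\alpha_{p,q}(t)=u_pe^{tz}|p|^{1/2}e^{t\ln v}|p|^{1/2}$ and compute the inverse, carrying out exactly the same algebraic cancellations as in Proposition \ref{despol} to arrive at
\begin{equation*}
\alpha_{p,q}^{-1}\dot\alpha_{p,q} = |p|^{-1/2}e^{-t\ln v}|p|^{-1/2}\,z\,|p|^{1/2}e^{t\ln v}|p|^{1/2} + |p|^{-1/2}\ln(v)\,|p|^{1/2}.
\end{equation*}
Setting $x:=|p|^{1/2}e^{t\ln v}|p|^{1/2}$ and using the parallelogram inequality $\|a+b\|_2^2 \le 2(\|a\|_2^2+\|b\|_2^2)$ together with the submultiplicativity $\|ayb\|_2 \le \|a\|\,\|y\|_2\,\|b\|$, one obtains
\begin{equation*}
\|\alpha_{p,q}^{-1}\dot\alpha_{p,q}\|_2^2 \le 2\bigl(\|x^{-1}\|^2\|x\|^2\|z\|_2^2 + \||p|^{-1/2}\|^2\|\ln(v)\|_2^2\||p|^{1/2}\|^2\bigr).
\end{equation*}
Then I would estimate $\|x\|^2 \le \|p\|^2 e^{2\|\ln v\|}$ and $\|x^{-1}\|^2 \le \|p^{-1}\|^2 e^{2\|\ln v\|}$, which together with $\||p|^{\pm 1/2}\|^2 = \|p^{\pm 1}\|$ produce the two terms appearing inside the maximum in $c(p,q)^2$.

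Taking square roots yields $\|\alpha_{p,q}^{-1}\dot\alpha_{p,q}\|_2 \le c(p,q)\bigl(\|z\|_2^2+\|\ln(v)\|_2^2\bigr)^{1/2}$, and since the right-hand side is independent of $t$, integrating over $[0,1]$ gives $L_{\mathcal{I}}(\alpha_{p,q}) \le c(p,q)\bigl(\|z\|_2^2+\|\ln(v)\|_2^2\bigr)^{1/2}$, whence the stated bound on $d_{\mathcal{I}}(p,q)$. The only real subtlety, and hence the main obstacle, is justifying that $\alpha_{p,q}$ actually lies in $G$: this is precisely what the "close" hypothesis is for, and it is settled by the neighbourhood argument in the paragraph just before (\ref{alphapq}) combined with the fact that $K$ is a Banach–Lie subgroup of $G$ and $M_G$ is closed under its geodesic flow. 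Beyond that point, the calculation is purely formal and identical to the ${\rm GL}_2(\mathcal{H})$ case treated in Proposition \ref{despol}.
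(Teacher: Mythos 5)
Your proposal is correct and follows essentially the same route as the paper: the paper's proof likewise constructs the curve $\alpha_{p,q}$ of (\ref{alphapq}) inside $G$ (using the closeness of $p,q$ so that $u_p^{-1}u_q=e^z$ with $z\in\mathfrak{k}$, and the fact that the positive part stays in $M_G$) and then repeats verbatim the norm estimates of Proposition \ref{despol} to bound $L_{\mathcal{I}}(\alpha_{p,q})$ by $c(p,q)\big(\|z\|_2^2+\|\ln(v)\|_2^2\big)^{1/2}$. Your only addition is spelling out those estimates and the membership argument in more detail, which the paper leaves as a reference to Proposition \ref{despol} and Theorem \ref{propself}.
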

\begin{proof} The proof is similar to Proposition \ref{despol}. Since $p,q \in G$ are close, then we can build the smooth curve ${\alpha}_{p,q}\subset G$ (\ref{alphapq}) that joins $p$ to $q$; so we can repeat the argument that we gave in Proposition \ref{despol}. Then in this case we have, $$\|\alpha^{-1}_{p,q}\dot{\alpha}_{p,q}\|_2\leq c(p,q)\big(\|z\|^{2}_2+\| \ln(v) \|^{2}_2\big)^{1/2}.$$
and then $d_{\mathcal{I}}(p,q)\leq L_{\mathcal{I}}(\alpha_{p,q})\leq  c(p,q)\big(\|z\|^{2}_2+\| \ln(v) \|^{2}_2\big)^{1/2}$.
\end{proof}
\begin{lem}  If $(x_n)\subset G$ is a Cauchy sequence in $(G,d_{\mathcal{I}})$ then it is a Cauchy sequence in  $(G,\|. \|_2)$.
\end{lem}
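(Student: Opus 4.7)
The plan is to produce, for every smooth curve $\alpha:[0,1]\to G$ joining $p$ to $q$, the comparison
$$\|p-q\|_2 \leq \Big(\sup_{t\in[0,1]}\|\alpha(t)\|\Big)\, L_{\mathcal{I}}(\alpha),$$
and to control the operator-norm supremum uniformly along curves connecting elements of the $d_{\mathcal{I}}$-Cauchy sequence $(x_n)$. The comparison will follow by integrating $\dot\alpha=\alpha\beta$ (where $\beta=\alpha^{-1}\dot\alpha$) and applying the submultiplicative estimate $\|ab\|_2\leq\|a\|\,\|b\|_2$ inside the integral.

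To control $\sup_t\|\alpha(t)\|$, I would use $\|\cdot\|\leq\|\cdot\|_2$ in the integral representation $\alpha(t)=\alpha(0)+\int_0^t\alpha(s)\beta(s)\,ds$ to obtain $\|\alpha(t)\|\leq\|\alpha(0)\|+\int_0^t\|\alpha(s)\|\,\|\beta(s)\|_2\,ds$, and conclude by Gronwall's lemma that
$$\sup_{t\in[0,1]}\|\alpha(t)\|\leq\|\alpha(0)\|\,\exp(L_{\mathcal{I}}(\alpha)).$$
Since $(x_n)$ is $d_{\mathcal{I}}$-Cauchy, $M:=\sup_n d_{\mathcal{I}}(x_1,x_n)<\infty$; picking, for each $n$, a curve $\gamma_n\subset G$ from $x_1$ to $x_n$ with $L_{\mathcal{I}}(\gamma_n)\leq M+1$, the Gronwall estimate yields $\|x_n\|\leq\|x_1\|\,e^{M+1}$, a uniform operator-norm bound on the sequence.

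With this in hand, fix $\varepsilon>0$ and, for each pair $(n,m)$, choose a curve $\alpha_{n,m}\subset G$ from $x_n$ to $x_m$ with $L_{\mathcal{I}}(\alpha_{n,m})\leq d_{\mathcal{I}}(x_n,x_m)+\varepsilon$. The Gronwall bound gives $\sup_t\|\alpha_{n,m}(t)\|\leq C$ for a constant $C$ depending only on $\|x_1\|$, $M$ and $\varepsilon$, and combining with the opening comparison produces
$$\|x_n-x_m\|_2\leq C\,(d_{\mathcal{I}}(x_n,x_m)+\varepsilon).$$
Letting $\varepsilon\to 0$ and using that $d_{\mathcal{I}}(x_n,x_m)\to 0$, the conclusion follows. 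The main obstacle is precisely this Gronwall step: the length functional $L_{\mathcal{I}}$ only sees the 2-norm of $\beta$, while iterated control of $\|\alpha\|$ needs operator-norm submultiplicativity, and the two are bridged via the continuous embedding $\|\cdot\|\leq\|\cdot\|_2$ of Hilbert--Schmidt operators into bounded operators.
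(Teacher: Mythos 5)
Your argument is correct, and it is self-contained precisely where the paper is not: the paper's proof of this lemma is a one-line reduction to Lemma 7.1 of \cite{Lopez Galvan}, justified by the remark that the geodesics of the left invariant connection on $G$ have the same explicit form $Exp_g(tv)=ge^{tv^{*}}e^{t(v-v^{*})}$, so that the earlier argument ``adapts''. Your route never touches geodesics or the connection at all: it rests only on the two inequalities $\Vert ab\Vert_2\leq \Vert a\Vert\,\Vert b\Vert_2$ and $\Vert\cdot\Vert\leq\Vert\cdot\Vert_2$, which give the comparison $\Vert p-q\Vert_2\leq\bigl(\sup_t\Vert\alpha(t)\Vert\bigr)L_{\mathcal{I}}(\alpha)$ for an arbitrary piecewise smooth curve in $G$, together with the Gronwall bound $\Vert\alpha(t)\Vert\leq\Vert\alpha(0)\Vert e^{L_{\mathcal{I}}(\alpha)}$; this is essentially the estimate used in \cite{Andruchow1} for the completeness of ${\rm GL}_2(\mathcal{H})$ with the left invariant metric, and since it applies verbatim to any closed subgroup it is arguably cleaner and more transparent than invoking the geodesic structure, which your proof simply does not need. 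One small point of bookkeeping: your constant $C$ depends on $\varepsilon$ (through the factor $e^{\varepsilon}$ coming from $L_{\mathcal{I}}(\alpha_{n,m})\leq d_{\mathcal{I}}(x_n,x_m)+\varepsilon$), so ``letting $\varepsilon\to 0$'' should be preceded by capping $\varepsilon\leq 1$, which makes $C$ uniform in $n,m,\varepsilon$ (for instance $C=\Vert x_1\Vert e^{3M+2}$, using $d_{\mathcal{I}}(x_n,x_m)\leq 2M$); with that trivial adjustment the estimate $\Vert x_n-x_m\Vert_2\leq C\bigl(d_{\mathcal{I}}(x_n,x_m)+\varepsilon\bigr)$ yields the Cauchy property exactly as you claim.
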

\begin{proof} Since the geodesics of the Riemannian connection are of the form $\alpha(t)=Exp_g(tv)=ge^{tv^{*}}e^{t(v-v^{*})}$, the proof of this lemma can be adapted easily from Lemma 7.1 in \cite{Lopez Galvan}. 
\end{proof}
Now we are in position to obtain our final result.
\begin{teo} The metric space $(G,d_{\mathcal{I}})$ is complete.
\end{teo}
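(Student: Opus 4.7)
The plan is to use the preceding lemma to turn $d_{\mathcal{I}}$-Cauchyness into $\Vert\cdot\Vert_2$-Cauchyness, to identify the resulting limit as an element of $G$, and then to upgrade back to $d_{\mathcal{I}}$-convergence by means of Proposition \ref{des}.

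Let $(x_n)\subset G$ be a $d_{\mathcal{I}}$-Cauchy sequence. By the preceding lemma, $(x_n)$ is $\Vert\cdot\Vert_2$-Cauchy, hence converges in the ambient Banach space $1+\mathcal{B}_2(\mathcal{H})$ to some $x$. The first step is to verify that $x$ is invertible, so that $x\in{\rm GL}_2(\mathcal{H})$ and, by closedness of $G$ in ${\rm GL}_2(\mathcal{H})$, $x\in G$. I would do this by exploiting the explicit form of the geodesics $Exp_g(v)=ge^{v^{*}}e^{v-v^{*}}$: since $v-v^{*}$ is anti-hermitic the factor $e^{v-v^{*}}$ is unitary, and therefore $\Vert g_0^{-1}g_1\Vert\leq e^{\Vert v^{*}\Vert}\leq e^{\Vert v\Vert_2}$, with the symmetric bound on $\Vert g_1^{-1}g_0\Vert$, whenever $g_1=Exp_{g_0}(v)$. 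Because $Exp$ is locally a diffeomorphism, for sufficiently large $n,m$ one can write $x_m=Exp_{x_n}(v_{nm})$ with $\Vert v_{nm}\Vert_2$ comparable to $d_{\mathcal{I}}(x_n,x_m)$, and the above bound then yields uniform control of $\Vert x_n\Vert$ and $\Vert x_n^{-1}\Vert$. Combined with $x_n\to x$ in operator norm (a consequence of $\Vert\cdot\Vert\leq\Vert\cdot\Vert_2$), this forces $x$ to be invertible.

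With $x\in G$ in hand, I would invoke the continuity of the polar decomposition (\ref{difeopolar}) at the invertible point $x$ to obtain $u_{x_n}\to u_x$ and $\vert x_n\vert\to\vert x\vert$ in $\Vert\cdot\Vert_2$. For large $n$ this places $u_x^{-1}u_{x_n}$ inside a small neighbourhood of $1\in K$, so there exist $z_n\in\mathfrak{k}$ with $e^{z_n}=u_x^{-1}u_{x_n}$ and $\Vert z_n\Vert_2\to 0$; and, setting $v_n:=\vert x\vert^{-1/2}\vert x_n\vert\vert x\vert^{-1/2}$, one has $v_n\to 1$ in $\Vert\cdot\Vert_2$, so both $\Vert\ln v_n\Vert_2$ and $\Vert\ln v_n\Vert$ tend to zero. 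Proposition \ref{des}, applied to the pair $(p,q)=(x,x_n)$, then yields
\[
d_{\mathcal{I}}(x_n,x)\leq c(x,x_n)\bigl(\Vert z_n\Vert_2^{2}+\Vert\ln v_n\Vert_2^{2}\bigr)^{1/2},
\]
and the constant $c(x,x_n)$ remains bounded, because $\Vert x_n\Vert$ and $\Vert x_n^{-1}\Vert$ converge to $\Vert x\Vert$ and $\Vert x^{-1}\Vert$ while $\Vert\ln v_n\Vert\to 0$. Hence $d_{\mathcal{I}}(x_n,x)\to 0$, completing the argument.

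I expect the main obstacle to lie in the invertibility of the $\Vert\cdot\Vert_2$-limit: the preceding lemma by itself only places $x$ in $1+\mathcal{B}_2(\mathcal{H})$, and without invertibility neither the closedness of $G$, nor the continuity of the polar decomposition at $x$, nor Proposition \ref{des} can be brought to bear. Extracting uniform operator-norm control of $\Vert x_n^{-1}\Vert$ from the $d_{\mathcal{I}}$-Cauchy hypothesis is precisely the point where one has to engage with the concrete exponential structure of $G$, in the same spirit as Lemma 7.1 of \cite{Lopez Galvan}.
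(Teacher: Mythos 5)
Your proposal follows the paper's own route almost step for step: the preceding lemma converts $d_{\mathcal{I}}$-Cauchyness into $\Vert\cdot\Vert_2$-Cauchyness, the limit is identified as a point of $G$, continuity of the modulus and of the unitary part gives $u_{x_n}\to u_x$ and $\vert x_n\vert\to\vert x\vert$, and Proposition \ref{des} together with a uniform bound on $c(x,x_n)$ upgrades the convergence to $d_{\mathcal{I}}$. So in structure you and the paper agree; the paper simply writes ``since $G$ is closed there exists $x\in G$'' at the exact point where you insert an invertibility discussion, and it delegates the quantitative input to the adapted Lemma 7.1 of \cite{Lopez Galvan}.

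The one step I would push back on is that invertibility discussion, which you yourself flag as the main obstacle. As written it is not justified: you set $x_m=Exp_{x_n}(v_{nm})$ with $\Vert v_{nm}\Vert_2$ ``comparable to'' $d_{\mathcal{I}}(x_n,x_m)$, on the sole grounds that $Exp$ is a local diffeomorphism. But $Exp_{x_n}$ is a diffeomorphism from a $2$-norm ball in $T_{x_n}G$ onto a $2$-norm neighbourhood of $x_n$, and to know that a $d_{\mathcal{I}}$-small ball around $x_n$ sits inside that image, uniformly in $n$, you already need to control $\Vert x_n\Vert$ and $\Vert x_n^{-1}\Vert$ (or the $2$-norm) in terms of the geodesic distance --- which is exactly what you are trying to establish, so the step is circular as stated. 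The standard repair (and essentially the content of the adapted Lemma 7.1, cf. also \cite{Andruchow1}) avoids geodesics altogether: for any smooth curve $\alpha$ from $p$ to $q$ one has $\Vert\dot\alpha\Vert\leq\Vert\alpha\Vert\,\Vert\alpha^{-1}\dot\alpha\Vert_2$ and $\frac{d}{dt}\alpha^{-1}=-\alpha^{-1}\dot\alpha\,\alpha^{-1}$, whence by Gronwall $\Vert q\Vert\leq\Vert p\Vert e^{L_{\mathcal{I}}(\alpha)}$ and $\Vert q^{-1}\Vert\leq\Vert p^{-1}\Vert e^{L_{\mathcal{I}}(\alpha)}$; taking the infimum over curves, a $d_{\mathcal{I}}$-Cauchy (hence bounded) sequence has $\Vert x_n\Vert$ and $\Vert x_n^{-1}\Vert$ uniformly bounded, so its operator-norm limit $x$ is invertible and $x\in G$ by closedness of $G$ in ${\rm GL}_2(\mathcal{H})$. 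With that substitution your argument is complete; note also that once Proposition \ref{des} is applied with $p=x$ fixed, the constant $c(x,x_n)$ depends only on $\Vert x\Vert$, $\Vert x^{-1}\Vert$ and $\Vert\ln v_n\Vert\to 0$, so no control of $\Vert x_n^{-1}\Vert$ is needed at that final stage.
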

\begin{proof}  Let $(x_n)\subset G$ be a $d_{\mathcal{I}}$-Cauchy sequence, by the above lemma it is $\Vert .\Vert_2$-Cauchy; then since $G$ is closed there exists $x \in G$ such that $x_n \stackrel{\|.\|_2}\longrightarrow x$. Now we will show that $x_n \stackrel{d_{\mathcal{I}}} \longrightarrow x$; indeed from the continuity of the module we have that $\vert x_n \vert$ converges to $\vert x\vert$ in $\|.\|_2$ and its unitary part $u_{x_n}=x_n\vert x_n\vert^{-1}$ converges to $u_x=x\vert x\vert^{-1}$. The sequence $\vert x\vert^{-1/2}\vert x_n\vert\vert x\vert^{-1/2}$ converges to $1$  and then $\|\ln(\vert x\vert^{-1/2}\vert x_n\vert\vert x\vert^{-1/2})\|_2\rightarrow 0.$ Since $x_n$ converges to $x$, we can assume that $x_n$ is close to $x$ if $n\geq n_0$, therefore we can use Proposition \ref{des} to estimate the geodesic distance. Then we have $$d_{\mathcal{I}}(x,x_n)\leq c(x,x_n) \big(\|z_n\|^{2}_2+\| \ln(\vert x\vert^{-1/2}\vert x_n\vert\vert x\vert^{-1/2}) \|^{2}_2\big)^{1/2}$$ where $z_n \in \mathfrak{k}\subset \mathfrak{g}$ is such that $u_x^{-1}u_{x_n}=e^{z_n}$ (since  $u_x^{-1}u_{x_n}$ is close to 1 and $K$ is a Banach-Lie subgroup of $G$). We also have $\Vert z_n\Vert_2 \rightarrow 0$. Now we will see that $c(x,x_n)$ is uniformly bounded. Indeed, since $\|\ln(\vert x\vert^{-1/2}\vert x_n\vert\vert x\vert^{-1/2})\|\leq\|\ln(\vert x\vert^{-1/2}\vert x_n\vert\vert x\vert^{-1/2})\|_2 \rightarrow 0$, then for $n$ large we can assume that $\|\ln(v_n)\|\leq 1$ where $v_n=\vert x\vert^{-1/2}\vert x_n\vert \vert x \vert^{-1/2}$ as we denoted in Proposition \ref{des}. Finally we have $$c(x,x_n)^2=2\max \lbrace 
   \ e^{4\|\ln(v_n)\|}   \big( \|x\| \|x^{-1}\|\big)^2, \ \|x\| \|x^{-1}\| \rbrace$$ $$\leq 2\max \lbrace  \ e^{4} \big( \|x\| \|x^{-1}\|\big)^2 , \ \|x\| \|x^{-1}\| \rbrace$$ is clearly uniformly bounded and then it is clear that $d_{\mathcal{I}}(x,x_n)\rightarrow 0$.
\end{proof}

\paragraph{Acknowledgements}
I would like to thank Prof. G. Larotonda for his suggestions and support.

\bigskip
{\footnotesize Alberto Manuel L\'opez Galv\'an.\\
Instituto Argentino de Matem\'atica "Alberto P. Calder\'on".\\
Saavedra 15 (C.P. 1083). Buenos Aires, Argentina.\\
e-mail: mlopezgalvan@hotmail.com}

\end{document}